\newtheorem{theorem}{Theorem}[section]
\newtheorem{lem}[theorem]{Lemma}
\newtheorem{cor}[theorem]{Corollary}
\newtheorem{prop}[theorem]{Proposition}
\theoremstyle{definition}
\newtheorem{remark}[theorem]{Remark}
\newcommand{\T}{\mathbb{T}}
\newcommand{\R}{\mathbb{R}}
\newcommand{\Z}{\mathbb{Z}}
\newcommand{\N}{\mathbb{N}}
\renewcommand{\:}{\colon}
\renewcommand{\varphi}{\phi}
\newcommand{\ssq}{\subseteq}
\newcommand{\eps}{\varepsilon}
\newcommand{\mc}{\mathcal}
\renewenvironment{proof}[1][\proofname]{%
   \par\pushQED{\qed}\normalfont%
   \topsep6\p@\@plus6\p@\relax
   \trivlist\item[\hskip\labelsep\bfseries#1\@addpunct{.}]%
   \ignorespaces
}{%
   \popQED\endtrivlist\@endpefalse
}
\begin{document}

\title[Pinched systems]{On the probability of positive finite-time Lyapunov exponents on strange nonchaotic attractors}

\author{F.~Remo}
\address{Institute of Mathematics, Friedrich Schiller University Jena, Germany}
\email{flavia.remo@uni-jena.de}

\author{G.~Fuhrmann}
\address{Department of Mathematical Sciences, Durham University, United Kingdom}
\email{gabriel.fuhrmann@durham.ac.uk}

\author{T.~J\"ager} \address{Institute of Mathematics, Friedrich Schiller
  University Jena, Germany} \email{tobias.jaeger@uni-jena.de}

\subjclass[2010]{}

\begin{abstract} 
We study strange non-chaotic attractors in a class of quasiperiodically forced
monotone interval maps known as pinched skew products. We prove that the
probability of positive time-$N$ Lyapunov exponents---with respect to the unique
physical measure on the attractor---decays exponentially as $N\to \infty$. The
motivation for this work comes from the study of finite-time Lyapunov exponents
as possible early-warning signals of critical transitions in the context of
forced dynamics.

\medskip
\noindent\emph{2020 MSC numbers: 37C60, 37G35}
\end{abstract}

\maketitle

\section{Introduction}\label{Intro}
In this article, we study quasiperiodically forced interval maps of the form
\begin{align}\label{eq: F intro}
F_{\kappa}:\mathbb{T}^{D}\times [0,1]\rightarrow \mathbb{T}^{D}\times [0,1],
\quad F_{\kappa}(\theta, x) = \left(\theta+v, \tanh(\kappa x)\cdot g(\theta)
\right),
\end{align}
where $\kappa>0$ is a real parameter, $v\in \T^D$ is a totally irrational
rotation vector\footnote{We say $v\in\T^D$ is \emph{totally irrational} if there is no  non-zero $n\in\Z^d$ with $\langle v,n\rangle\in \Z$.} and the
multiplicative forcing term $g:\T^D\to[0,1]$ is given by
\begin{align} \label{eq: forcing intro}
  g(\theta) = \frac{1}{D} \cdot\sum_{i=1}^{D}\sin(\pi\theta_{i}) \ . 
\end{align}
Systems of this kind are often called {\em pinched skew products}, where
{\em pinched} refers to the fact that the forcing term $g$ vanishes for some
$\theta\in\T^D$ (here, at $\theta=0$). 
Pinched skew-products received considerable attention due
the occurrence of so-called strange non-chaotic attractors (SNAs)
\cite{grebogi/ott/pelikan/yorke:1984,pikovski/feudel:1995,keller:1996,glendinning:2002,stark:2003,jaeger:2006,GroegerJaeger2013SNADimensions}. 
Due
to their specific properties---in particular, the pinching in combination with the
invariance of the zero line $\T^D\times\{0\}$---they are technically
more accessible than other forced systems that exhibit SNAs so that they have
been used on various occasions for case studies concerning the structural
properties of such attractors. This led, for instance, to first results on the
topological structure \cite{jaeger:2006} and the dimensions
\cite{GroegerJaeger2013SNADimensions} of SNAs, which have later been extended to
the more difficult situation of additive quasiperiodic forcing
\cite{bjerkloev:2005,bjerkloev:2007,FuhrmannGrogerJager2018}.

In a similar spirit, the aim of this note is to establish a quantitative result
on the distribution of positive finite-time Lyapunov exponents on the SNA
appearing in the system given by (\ref{eq: F intro}) and (\ref{eq: forcing
  intro}). Given $(\theta,x)\in\T^D\times[0,1]$ and $N\in\N$, we define the
time-$N$-Lyapunov exponent as
\begin{align*}
 \lambda_N(\theta,x)=\log \left(\partial_x F^{N}_\kappa(\theta,x)\right)/N \ .
\end{align*}
The (asymptotic) Lyapunov exponents are then given by
\begin{align*}
 \lambda(\theta,x)=\lim_{N\to\infty} \lambda_N(\theta,x) \ . 
\end{align*}
As established in \cite{keller:1996}, for any $\kappa>\kappa_0:=e^{-\int_{\T^D}
  \log g(\theta)d\theta}$, there exists a unique physical measure $\mathbb
P_\kappa$ of the system (\ref{eq: F intro}) that is ergodic and has a negative
Lyapunov exponent. As a consequence, asymptotic Lyapunov exponents are $\mathbb
P_\kappa$-almost surely negative. 
However, on the invariant zero line
$\T^D\times\{0\}$, the pointwise Lyapunov exponents almost surely equal
$\log\kappa-\log\kappa_0$ (see Remark~\ref{r.zero_line_exponent} below). Hence,
for $\kappa>\kappa_0$, positive asymptotic Lyapunov
exponents are still present in the system and lead to a positive probability of
positive finite-time exponents for all times $N\in\N$. Our main result provides
information on the scaling behaviour of these probabilities.

\begin{theorem}\label{thm: main}
 Denote by $\mathbb P_\kappa$ the unique physical measure of (\ref{eq: F intro})
 with forcing function~(\ref{eq: forcing intro}). Let $p_{\kappa,N}=\mathbb
 P_\kappa(\{(\theta,x)\mid \lambda_N(\theta,x)>0\})$. Then there exists
 $\kappa_1>\kappa_0$ such that for all $\kappa\geq \kappa_1$, there are constants
 $\gamma_+\geq \gamma_->0$ (depending on $\kappa$) such that
 \[
  \exp(-\gamma_+ N)\leq p_{\kappa,N}\leq \exp(-\gamma_- N)
  \]
  holds for all $N\in\N$.
\end{theorem}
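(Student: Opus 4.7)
The strategy is to reduce the estimate to the base dynamics. By \cite{keller:1996}, the physical measure $\mathbb P_\kappa$ is the graph measure of the SNA function $\varphi_\kappa \colon \T^D \to [0,1]$, i.e., the pushforward of Lebesgue measure on $\T^D$ under $\theta \mapsto (\theta, \varphi_\kappa(\theta))$. Writing $\varphi = \varphi_\kappa$ and applying the chain rule to $F_\kappa$, the Lyapunov sum reads
\[
N\lambda_N(\theta, \varphi(\theta)) = N\log\kappa + \sum_{k=0}^{N-1}\log g(\theta + kv) - 2\sum_{k=0}^{N-1}\log\cosh\bigl(\kappa\varphi(\theta + kv)\bigr),
\]
so the task reduces to an analysis of these Birkhoff sums along the irrational rotation on $\T^D$.

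For the lower bound, I would identify a pinch-point neighbourhood from which every orbit stays close to the invariant zero line throughout the window. Iterating the defining relation $\varphi(\theta + v) = \tanh(\kappa\varphi(\theta))\,g(\theta)$ and using $\tanh(\kappa x) \le \kappa x$ gives $\varphi(\theta + kv) \le \kappa^k\,\varphi(\theta)\prod_{j=0}^{k-1}g(\theta + jv)$, and by unique ergodicity the product behaves like $\kappa_0^{-k}$ for Lebesgue-typical $\theta$. Taking $\theta = \theta^\ast + v$ with $\theta^\ast \in B_r(0)$ for $r \asymp (\kappa_0/\kappa)^N$, the initial estimate $\varphi(\theta) \le g(\theta^\ast) \le Cr$ then propagates to $\varphi(\theta + kv) \le \eta$ throughout $k \in \{0,\ldots,N-1\}$, with a fixed small constant $\eta = \eta(\kappa)$. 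Since $\log\cosh(\kappa\eta) \lesssim (\kappa\eta)^2$, the third sum contributes $O(N(\kappa\eta)^2)$, which for $\kappa_1$ large enough is strictly less than $\tfrac12 N(\log\kappa - \log\kappa_0)$. Combined with $\tfrac1N\sum\log g(\theta+kv) \to -\log\kappa_0$ on a full-measure set, this forces $\lambda_N(\theta, \varphi(\theta)) > 0$. The admissible $\theta^\ast$ fill a set of Lebesgue measure $\gtrsim r^D \asymp (\kappa_0/\kappa)^{DN}$, giving the lower bound with $\gamma_+$ comparable to $D\log(\kappa/\kappa_0)$.

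For the upper bound I would argue contrapositively: $\lambda_N(\theta, \varphi(\theta)) > 0$ forces an exponentially deep pinch event in the window. When $\kappa \ge \kappa_1$ is large and $\varphi(\theta) \ge \eta$, $\tanh(\kappa\varphi(\theta)) \approx 1$, so $\varphi(\theta + v) \approx g(\theta)$; in particular $\log\cosh(\kappa\varphi(\theta + v))$ stays comparable to $\log\cosh(\kappa g(\theta))$ and is only small when $g(\theta)$ itself is. A sustained sub-window on which $\varphi$ lies below threshold therefore requires the rotation orbit to enter $\{g \le \delta\}$ at some time $k^\ast$, after which the suppression persists only through a linearized phase of length $\asymp \log(1/g(\theta + k^\ast v))/\log(\kappa/\kappa_0)$ before $\varphi$ re-saturates. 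Mirroring the lower-bound computation, $\lambda_N > 0$ then forces $g(\theta + k^\ast v) \le (\kappa_0/\kappa)^{cN}$ for some $k^\ast \in \{0,\ldots,N-1\}$ and an explicit $c > 0$. A union bound over $k^\ast$, together with $\mathrm{Leb}(\{g \le \eta\}) \lesssim \eta^D$ (since $g$ vanishes linearly at the pinch point), yields $p_{\kappa,N} \le N(\kappa_0/\kappa)^{cDN} \le e^{-\gamma_- N}$ for a suitable $\gamma_- > 0$.

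The main obstacle is the upper bound, and more precisely ruling out that several moderate close approaches could combine to produce $\lambda_N > 0$ rather than a single deep one doing so on its own. I would address this by isolating the closest pinch event in the window, showing that its depth dominates the total log-cosh deficit, and then summing geometrically over possible depths and positions. A secondary difficulty is absorbing the subexponential fluctuations of $\sum\log g(\theta + kv)$---integrable against Lebesgue but unbounded near the pinch point---into the rate; the hypothesis $\kappa \ge \kappa_1$ is where one ensures that the gap $\log\kappa - \log\kappa_0$ comfortably dominates these fluctuations uniformly in $N$.
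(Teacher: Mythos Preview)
Your lower-bound strategy is close to the paper's, but both bounds in the paper hinge on an ingredient you do not invoke: the Diophantine condition on the rotation vector. In your lower bound you write that ``by unique ergodicity the product behaves like $\kappa_0^{-k}$ for Lebesgue-typical $\theta$'', and then apply this to $\theta=\theta^\ast+v$ with $\theta^\ast$ in a shrinking ball around the pinch point. But these $\theta$ are precisely the atypical ones: the orbit of such a $\theta$ may revisit the pinch neighbourhood within the window $\{0,\dots,N-1\}$, so the Birkhoff average of $\log g$ need not be close to $-\log\kappa_0$ uniformly over this set. The paper handles this via a quantitative recurrence estimate (returns of $\theta_*$ to $B_{r_j}(\theta_*)$ are at least $(m+1)^{j-1}$ apart, a direct consequence of the Diophantine condition), which lets one bound the number of close approaches explicitly and conclude $\lambda_N(\theta+\rho,0)\ge \tfrac12\log a$ on an exponentially small ball without appealing to ergodic averages. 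The extension from the zero line to $\phi^+$ is then as you describe.

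The more serious gap is in your upper bound. You correctly identify the obstacle---that several moderate pinch approaches could combine to make $\lambda_N>0$---but the proposed fix (``isolate the closest pinch event and show its depth dominates'') is not a proof, and it is not clear it can be made into one: a geometric sum of moderate contributions need not be dominated by its largest term when the number of terms grows with $N$. The paper's route is structurally different and does not pass through a single-deep-event characterisation at all. Instead it controls, again via Diophantine recurrence, how often the iterated upper boundary lines $\phi_k(\theta_k)$ can fall below the contraction threshold $2L_0$: outside an explicit bad set $\bigcup_{j=q}^{N}B_{R_j}(\tau_j)$ one has $s^N_{k_0}(\theta)\le \tfrac{11}{m}(N-k_0)$, which together with the uniform contraction above $L_0$ forces the finite-time exponent to be negative. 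The bad set has Lebesgue measure $\lesssim N\cdot a^{-(q-1)D/m}$, and choosing $q\asymp \varepsilon N$ gives the exponential upper bound. So rather than arguing that positivity of $\lambda_N$ implies one very deep approach, the paper shows directly that negativity of $\lambda_N$ holds outside an explicitly small set defined by finitely many arithmetic conditions.
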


\begin{figure}[b]
\centering \includegraphics[scale=0.35]{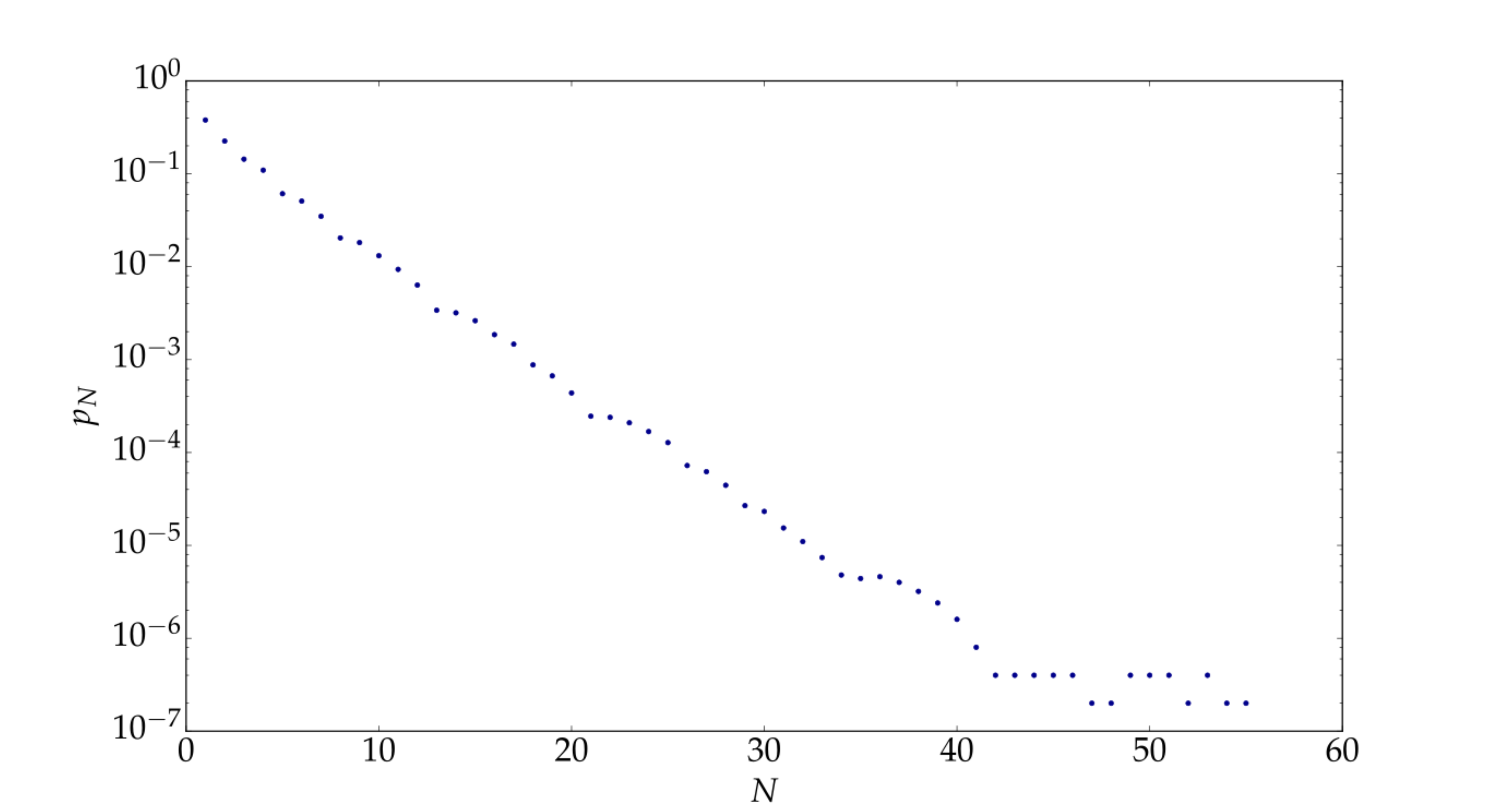}
 \caption[Scaling of probability of non-negative finite Lyapunov exponent]{A
   logarithmic plot of the numerically obtained probability $p_N=p_{\kappa,N}$ over
   $N$ for the system \eqref{eq: F intro} with $D=1$, $\kappa=3$ and $v$ the
   golden mean.  The graph shows the relative frequency of non-negative
   finite-time Lyapunov exponents among a grid of $5\cdot 10^6$ initial
   conditions on the SNA (see also Figure~\ref{fig: sna pinched}). Consistent
   with the statement of Theorem~\ref{thm: main}, the plot indicates an
   exponential decay.} \label{fig: FTLE}
\end{figure}

Apart from its intrinsic interest, motivation for this result stems from the
study of critical transitions. One major problem in this field is the
identification of suitable (that is, observable and reliable) early warning
signals
\cite{van2007slow,Schefferetal2009EWSforCT,scheffer2009critical,kuehn2011mathematical,scheffer2012anticipating,veraart2012recovery}
for such transitions.  A commonly proposed and utilized early warning signal for
fold bifurcations---which are often cited as a paradigmatic example of critical
transitions---are \emph{slow recovery rates} (also referred to as a
\emph{critical slowing down})
\cite{van2007slow,Schefferetal2009EWSforCT,scheffer2009critical,scheffer2012anticipating}. Since
this notion has been coined in an interdisciplinary context and is used in a
wide variety of situations, there is no comprehensive and rigorous mathematical
definition of this term and we refrain from attempting to give one
here. However, in the classical case of an autonomous fold bifurcation, recovery
rates can be identified with the Lyapunov exponents of the stable
equilibria. Thus, in this situation, critical slowing down simply refers to the
fact that when the stable and unstable equilibria involved in the bifurcation
approach each other and eventually merge at the critical parameter, the
resulting single fixed point is neutral, that is, it has exponent zero.

This picture changes significantly when a fold bifurcation takes place under the
influence of external quasiperiodic forcing. First of all, the resulting
\emph{non-autonomous} systems generally do not allow for fixed points.
Therefore, when carrying over ideas from an autonomous to a non-autonomous
setting, one needs an appropriate replacement.  In the present context, this
part is played by so-called \emph{invariant graphs} (see
Section~\ref{preliminaries}) and accordingly, non-autonomous fold bifurcations
occur as invariant graphs approach each other upon a change of system
parameters.  In stark contrast to autonomous fold bifurcations, this does not
necessarily yield neutral invariant graphs but may instead lead to a strange
non-chaotic attractor-repeller-pair
\cite{nunez/obaya:2007,AnagnostopoulouJaeger2012SaddleNodes} created at the
bifucation point. This alternative pattern is referred to as a {\em non-smooth
  saddle-node bifurcation}. Moreover, just as for pinched systems, under
suitable conditions, there exists a unique physical measure $\mathbb P$ which is
supported on the attractor and has a negative Lyapunov exponent (see
\cite{keller:1996,fuhrmann2013NonsmoothSaddleNodesI}).  However, this means that
Lyapunov exponents remain $\mathbb P$-almost surely negative and bounded away
from zero during a non-smooth saddle-node bifurcation (see
Section~\ref{preliminaries} for more details).

While this seems to rule out the viability of slow-recovery rates as early
warning signals for non-smooth fold bifurcations, one should bear in mind that
experiments never measure the actual Lyapunov exponent but rather approximations
of it. In other words, one rather measures finite-time Lyapunov exponents
instead of asymptotic ones. Since the presence of an SNA implies that positive
finite-time exponents occur with positive probability for any time $N\in \N$
\cite{pikovski/feudel:1995,RemoFuhrmannJager2019}, one may hence wonder whether
the observation of non-negative finite-time Lyapunov exponents can help to
detect an SNA in practice. However, if $N$ is chosen too small, then positive
time-$N$-exponents can be observed already far from a
bifurcation. Conversely, for large $N$, the probability of observing positive
exponents on this time-scale converges to zero since the unique physical
measure has a negative exponent. It is in this context that the scaling behaviour
of the probabilities of time-$N$-exponents with $N\to\infty$ becomes
important. Numerical studies for the quasiperiodically forced Allee model
performed in \cite{RemoFuhrmannJager2019} remained somewhat inconclusive, which
is partly explained by the fact that the simulation of continuous-time systems
is considerably more time-consuming than that of discrete-time systems.
The
exponential decay obtained in Theorem~\ref{thm: main} is an indication that very
large data sets may be required to detect this kind of early-warning signals in
practice. As mentioned before, this interpretation relies on the hypothesis that
quasiperiodically forced systems undergoing a saddle-node bifurcation---as
studied in \cite{RemoFuhrmannJager2019}---show a behaviour comparable to that of
pinched systems treated here. 
We expect that using techniques from
\cite{bjerkloev:2005,fuhrmann2013NonsmoothSaddleNodesI,FuhrmannGrogerJager2018},
similar statements can be established for non-pinched systems but this
would require a considerably more involved analysis due to the inherent technical
difficulties.

This article is organised as follows.  In the next section, we introduce some
technical background on forced monotone interval maps and their invariant
graphs.  There, we also describe the physical measure $\mathbb P$ from above in
more detail.  In Section~\ref{sec: pinched systems}, we specify the class of
pinched skew-products for which we prove (a more general version of) the above
theorem.  This proof and the full statement---Theorem~\ref{thm: lower bound on
  probabilities} and Theorem~\ref{thm: exponential decay wrt graph measure}
(which gives the upper bound and is the harder part)---are given in the final
section, Section~\ref{sec: proof of main result}.

\medskip

{\bf Acknowledgments.}\quad This project has received funding from the European
Union's Horizon 2020 research and innovation program under the Marie Sk\l
odowska-Curie grant agreements No 643073 and No 750865.  TJ acknowledges support
by a Heisenberg grant of the German Research Council (DFG grant OE 538/6-1).

\section{Forced monotone interval maps and invariant graphs}\label{preliminaries}
Throughout this note, we deal with {\em quasiperiodically forced (qpf) monotone
  interval maps}, that is, skew products of the form
\begin{equation}
    F:\mathbb{T}^{D}\times [0,1]\rightarrow \mathbb{T}^{D}\times [0,1], \quad 
    (\theta, x) \mapsto (\rho(\theta),~ F_{\theta}(x)),
    \label{skew_product}
\end{equation}
where $\T^D=\R^D/\Z^D$ is the $D$-dimensional torus (for some $D\geq 1$),
\[
\rho\:\T^D\to \T^D,\qquad \theta\mapsto \theta+v
\]
is a minimal rotation with a \emph{rotation vector} $v$ and for each $\theta\in
\T^D$, $F_\theta$ is a continuously differentiable non-decreasing map on $[0,1]$
such that $(\theta,x)\mapsto F_{\theta}'(x)$ is continuous.  It is customary to
refer to $(\T^D,\rho)$ as the \emph{forcing system} (defined on the \emph{base}
$\T^D$); the maps $F_\theta$ $(\theta\in \T^D)$ are also referred to as
\emph{fibre maps} (defined on the \emph{fibres} $\{\theta\}\times [0,1]$).

An \emph{invariant graph} of $\eqref{skew_product}$ is a measurable function
$\varphi: \T^D\rightarrow [0,1]$ which satisfies
\begin{equation*}
    F_{\theta}(\varphi(\theta)) = \varphi(\theta + \rho) \qquad \textrm{for all } \theta\in\mathbb{T}^{D}.
\end{equation*}
From an intuitive perspective, invariant graphs are to be seen as non-autonomous
fixed points of \eqref{skew_product}.\footnote{Observe that due to the
  minimality of $\rho$, \eqref{skew_product} does not allow for actual fixed
  points.}  This idea is the basis for a bifurcation theory of invariant graphs,
see \cite{nunez/obaya:2007,AnagnostopoulouJaeger2012SaddleNodes}.  Independently
of this analogy, invariant graphs of qpf monotone interval maps are key to
understanding the dynamics of \eqref{skew_product} due to their intimate
relationship with the invariant sets and ergodic measures.

Every invariant graph $\phi$ comes with an ergodic measure $\mu_\phi$ where
$\mu_\phi(A)=\textrm{Leb}_{\T^D}(\phi^{-1}A)$ for each measurable $A\ssq
\T^D\times [0,1]$ and likewise, to each ergodic measure $\mu$ of
\eqref{skew_product} there is an invariant graph $\phi$ with $\mu=\mu_\phi$
\cite{Furstenberg1961,Arnold1998}.  Moreover, given an \emph{invariant set}
$A\subseteq \mathbb{T}^{D}\times [0,1]$ (that is, $A$ is closed and $F(A)=A$),
let
\begin{align*}
    \varphi_A^{+}(\theta)=\sup\{x\in [0,1]\:(\theta,x)\in A\} \qquad \text{and} \qquad 
    \varphi_A^{-}(\theta)=\inf\{x\in [0,1]\:(\theta,x)\in A\}
\end{align*}
for each $\theta\in \T^D$.  Then $\varphi_A^+$ and $\phi_A^-$ define the
so-called \emph{upper} and \emph{lower boundary graphs} of $A$ which are
invariant and---due to the compactness of $A$---upper and lower semi-continuous,
respectively \cite{stark:2003}.  Of particular relevance for us will be the
upper boundary graph of the \emph{global attractor}
\[
 \bigcap_{n\in\N}F^n(\T^D\times [0,1]),
\]
which we simply denote by $\phi^+$.

The long-term behaviour of orbits near an invariant graph $\phi$ is largely
characterized by its \emph{Lyapunov exponent}
\begin{align*}
    \lambda(\varphi)=\int_{\mathbb{T}^{D}}\log F^{\prime}_{\theta}(\varphi(\theta))d\theta,
\end{align*}
provided this integral exists.

If $\lambda(\varphi)>0$, then $\phi$ is repelling; if $\lambda(\varphi)<0$, then
$\varphi$ is attracting and $\mu_\phi$ is a physical measure, see
\cite{jaeger:2003} for the details.  Here, by \emph{physical measure}, we refer
to an $F$-invariant ergodic measure $\mathbb P$ for which there is a positive
Lebesgue measure set $V\ssq \T^D\times[0,1]$ such that for every continuous
observable $f: \T^D\times [0,1] \to \R$ and all $(\theta,x)\in V$
\[
 1/n\cdot \sum_{\ell=0}^{n-1} f(F^\ell(\theta,x))=\int\! f \, d\mathbb P,
\]
see also \cite{Young2002}.  It is noteworthy that under suitable concavity
assumptions on $F_\theta$ (which are verified by \eqref{eq: F intro}),
\eqref{skew_product} has a unique physical measure given by the measure
$\mu_{\phi^+}$ on the upper boundary graph $\phi^+$ of the global attractor, see
\cite{keller:1996,AnagnostopoulouJaeger2012SaddleNodes}.

Observe that due to Birkhoff's Ergodic Theorem, $\lambda(\phi)$ equals the
Lyapunov exponent of the point $(\theta,\phi(\theta))$ for
$\textrm{Leb}_{\T^D}$-almost every $\theta$ (equivalently: for $\mu_\phi$-almost
every $(\theta,x)$) since
\begin{align*}
   \lambda(\theta,\varphi(\theta))=
   \lim_{n\rightarrow\infty}\frac{1}{n} \operatorname{log}(F^{n}_{\theta})'(\varphi(\theta))=\lim_{n\rightarrow\infty}\frac{1}{n}\sum_{\ell=0}^{n-1}\operatorname{log} F^{\prime}_{\rho^{\ell}(\theta)}(\varphi(\rho^{\ell}(\theta)))
\end{align*}
for $\textrm{Leb}_{\T^D}$-almost every $\theta\in\mathbb{T}^{D}$.  Note that on
the left-hand side of the above equation, we made use of the customary notation
\begin{align}\label{eq: dynamical composition}
F^n_\theta(x)=\pi_2\circ F^n(\theta,x)=F_{\theta+(n-1)\rho}\circ \ldots \circ F_{\theta+\rho}\circ F_\theta(x),
\end{align}
where $\pi_2\:\T^D\times [0,1]\to [0,1]$ denotes the projection to the second coordinate.

\begin{remark}
  \label{r.zero_line_exponent} Note that for the model (\ref{eq: F intro})
  and the {\em a priori} invariant graph $\psi=0$ given by zero line, we have
  that $F_\theta'(\psi(\theta))=F_\theta'(0)=\kappa\cdot g(\theta)$, so that
  \[
  \lambda(\psi)=\log\kappa 0 \int_{\T^D} \log g(\theta)\ d\theta = \log\kappa -
  \log\kappa_0
  \]
  in this case. Hence, the zero line is repelling for all $\kappa>\kappa_0$, and
  pointwise Lyapunov exponents on this line are positive almost surely (with
  respect to the Lebesgue measure on $\T^D\times\{0\}$).
\end{remark}

As we will discuss below, the unique physical measure of \eqref{eq: F intro} is
given by $\mu_{\phi^+}$, where $\phi^+$ is the upper boundary graph of the
global attractor.  Therefore, a big part of the proof of Theorem~\ref{thm: main}
boils down to analysing $\phi^+$ in considerable detail.  In that context, we
will utilize the obvious fact that $\phi^+$ is the pointwise limit of the
sequence of \emph{iterated upper boundary lines} $(\phi_n)_{n\in \N_{\geq 0}}$,
where
\begin{align}\label{eq: defn iterated boundary lines}
 \phi_n\:\T^D\to [0,1],\qquad \theta\mapsto F_{\theta-n\rho}^n(1).
\end{align}
Note that the graph of $\phi_n$ coincides with $F^n(\T^D\times \{1\})$ (recall
the notation from \eqref{eq: dynamical composition}).  It is further easy to see
(and important to note) that the monotonicity of the fibre maps $F_\theta$
implies $\phi_{n+1}\leq \phi_n$ for all $n\in\N$. 

For the convenience of the reader, we close this section with a brief
description of the invariant graphs of \eqref{eq: F intro}.  While this
description will help to develop an intuition for the dynamics of \eqref{eq: F
  intro} and, more broadly, for the results discussed in this note, it is
strictly speaking not a prerequisite for the discussion in Section~\ref{sec:
  pinched systems} and Section~\ref{sec: proof of main result}.  For simplicity,
we may assume that $D=1$ in the remainder of this section.

It is immediate that independently of the value of $\kappa$, one invariant graph
of \eqref{eq: F intro} is given by the $0$-line (which just happens to be the
lower boundary graph of the global attractor).  Let us denote this graph by
$\phi^-$.  By direct computation, one can obtain that $\lambda(\phi^-)=\log
\kappa-\log 2$.

Clearly, if $\phi^-$ equals the upper boundary graph $\phi^+$ of the global attractor, then $\phi^-$ is the only invariant graph of $F_\kappa$.
However, with help of the iterated upper boundary lines, one can show that
$\lambda(\phi^+)\leq 0$, see \cite{jaeger:2003}.
Accordingly, if $\kappa>2$, the $0$-line $\phi^-$ is $\textrm{Leb}_{\T^1}$-almost surely distinct from $\phi^+$.

In other words, $F_{\kappa}$ has at least two invariant graphs if $\kappa>2$.
Moreover, just as concavity of interval maps implies the existence of at most
two fixed points (one of which is attracting and one of which is repelling), one
can show that the concavity of the fibre maps of $F_\kappa$ implies that
$\phi^-$ and $\phi^+$ are the only invariant graphs (and further,
$\lambda(\phi^-)>0>\lambda(\phi^+)$), see
\cite{keller:1996,AnagnostopoulouJaeger2012SaddleNodes}.\footnote{Note that
  accordingly, the physical measure $\mathbb P$ in the introduction has to
  coincide with $\mu_{\phi^+}$.}

Now, since $F(0,x)=0$, we have that $\phi^+$ necessarily intersects the $0$-line
along the orbit of $(\rho,0)$ which is, by minimality of $\rho$, dense in
$\T^D\times\{0\}$.  Therefore, while $\phi^+$ is upper-semicontinuous (as the
upper boundary graph of the global attractor) it clearly is not continuous and
$\phi^+$ is referred to as a \emph{strange non-chaotic attractor}, see
Figure~\ref{fig: sna pinched} for a plot of $\phi^+$.
\begin{figure}[!htb]
\centering
   \includegraphics[scale=0.35]{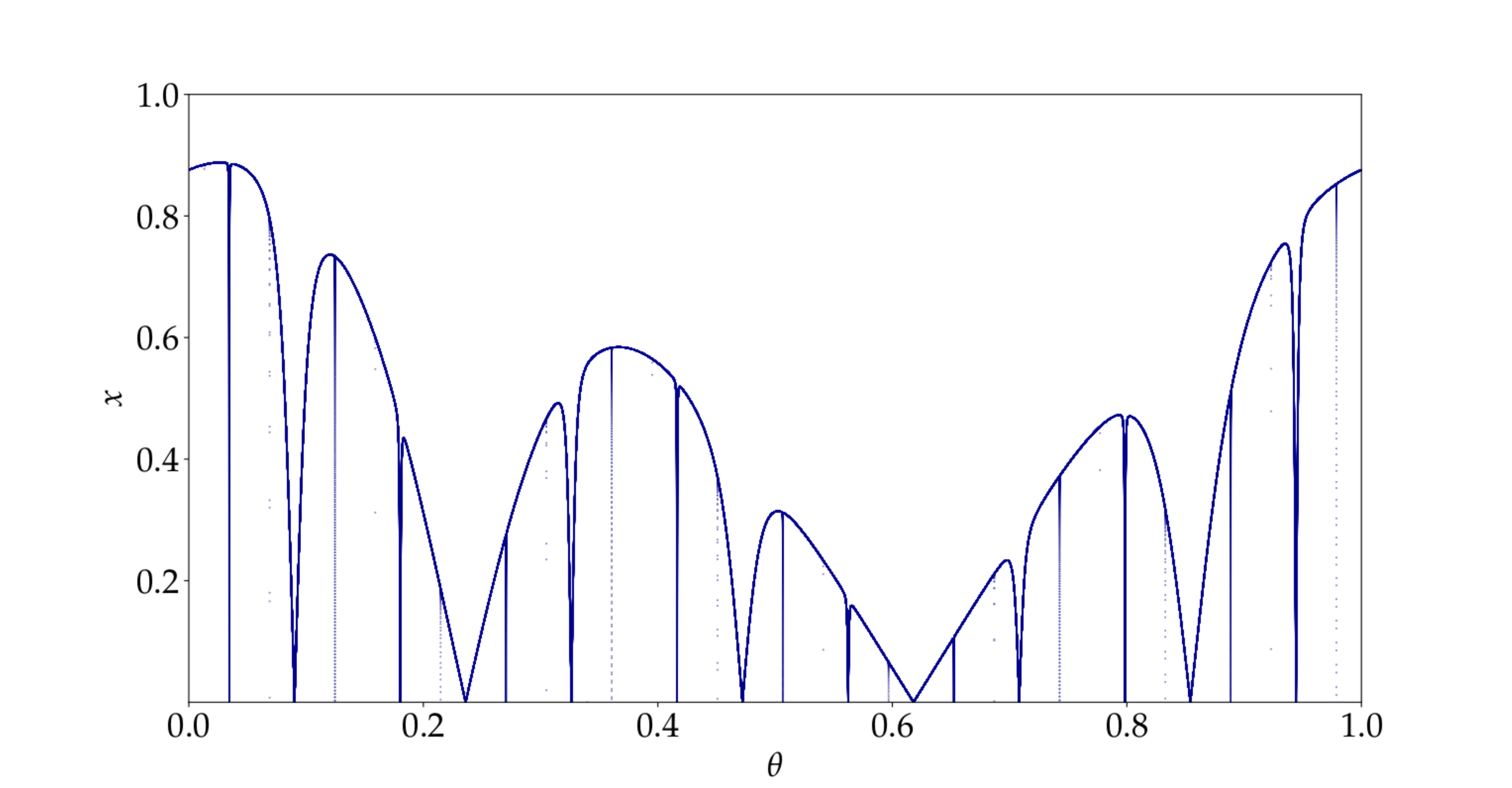} 
 \caption[Strange non-chaotic attractor in pinched systems]{The SNA $\phi^+$ of
   the parameter family $(\theta, x)\mapsto(\theta +\rho, \tanh(\kappa
   x)\cdot\sin(\pi\theta))$ with $\kappa=3$ and $\rho$ the golden mean.  The
   points in the above plot are exactly the initial conditions used to estimate
   $p_{\kappa,N}$ in Figure~\ref{fig: FTLE}.} \label{fig: sna pinched}
\end{figure}

\section{Pinched skew-product systems}\label{sec: pinched systems}
In this section, we specify the class of skew products within which we derive
asymptotic estimates on the probability of positive finite-time Lyapunov
exponents.  For later reference, we repeat some of the assumptions from the
previous section.  By $\mathcal{F}$, we refer to the class of quasiperiodically
forced monotone interval maps of the form
\begin{align*}
    F:\mathbb{T}^{D}\times [0,1]\rightarrow \mathbb{T}^{D}\times [0,1], \quad 
    (\theta, x) \mapsto (\rho(\theta),~ F_{\theta}(x)),
\end{align*}
which satisfy
\begin{description}
\item[{($\mathcal{F}_{1}$)}] the fibre maps $F_{\theta}$ are non-decreasing;
\item[{($\mathcal{F}_{2}$)}] the fibre maps $F_{\theta}$ are differentiable and
  $(\theta, x)\mapsto F_{\theta}^{\prime}(x)$ is continuous on
  $\mathbb{T}^{D}\times I$;
\item[{($\mathcal{F}_{3}$)}] $F$ is \emph{pinched}, that is, there is $\theta_*\in \T^D$ with $F_{\theta_*}(x)=0$ for all $x\in [0,1]$;
\item[{($\mathcal{F}_{4}$)}] $F_{\theta}(0)= 0$ for all $\theta \in \mathbb{T}^D$ (invariance of the 0-line).
\end{description}

Besides the qualitative assumptions $(\mc F_1)$--$(\mc F_4)$, we need a number of quantitative assumptions.
Let $F\in\mathcal{F}$ and assume that there exist parameters $\alpha>2,~\beta>0,~\gamma>0$ and $L_{0}\in(0, 1)$ such that for all $\theta\in\mathbb{T}^{D}$, the following holds.
\begin{align}
\label{f1}
\tag{F1}
\left|F_{\theta}(x)-F_{\theta}(y)\right|\leq \alpha\left|x-y\right| \qquad &\text{for all } x,y\in[0,1],\\
\label{f2}
\tag{F2}
\left|F_{\theta}(x)-F_{\theta}(y)\right|\leq \alpha^{-\gamma}\left|x-y\right| \qquad &\text{for all } x, y\in [L_{0}, 1],\\
\label{f3}
\tag{F3}
\left|F_{\theta}(x)-F_{\theta^{\prime}}(x)\right|\leq \beta\, d(\theta, \theta^{\prime}) \qquad &\text{for all } x\in [0, 1].
\end{align}
In particular, \eqref{f2} implies that the fibre maps $F_\theta$ are contracting in $[L_0,1]$. 

While the above assumptions determine $F$ in the fibres, we need some additional control over the forcing on $\T^D$.
To that end, we assume that the rotation vector $\rho\in\mathbb{T}^{D}$ is \emph{Diophantine}.
More specifically, setting $\tau_{n}=\rho^{n}(\theta_{*})=\theta_{*}+n\rho$ (the $n^{th}$-iterate of the \emph{pinched point} $\theta_{*}$), we assume that there are constants $c>0$ and $d>1$ such that
\begin{align}
\label{f4}
\tag{F4}
    d(\tau_{n},\theta_{*})\geq c\cdot n^{-d} \qquad \text{for all } n\in \mathbb{N}.
\end{align}

Finally, bringing the behaviour along the fibres and the dynamics on the base $\T^D$ together, we assume that there are constants $m\in\mathbb{N},~a>1$ and $0<b<1$ with 
\begin{align}
\label{f5}
\tag{F5}
    m>22\left(1+\frac{1}{\gamma}\right),\\
 \label{f6}
\tag{F6}
    a\geq (m+1)^{d},\\
\label{f7}
\tag{F7}
   b\leq c,\\
\label{f8}
\tag{F8}
     d(\tau_{n},\theta_{*})>b\qquad \text{for all } n\in\{1,\cdots,m-1\}
\end{align}
such that
 \begin{align}
\label{f9}
\tag{F9}
    F_{\theta}(x)\geq\min\left\{2L_{0},ax\right\}\cdot\min\left\{1, \frac{2}{b}d(\theta,\theta_{*})\right\} \qquad &\text{for all } (\theta, x)\in\mathbb{T}^{D}\times [0,1].
\end{align}
Our analysis of positive finite-time Lyapunov exponents will take place within the class
\begin{align*}
 \mc F^\ast=\{F\in \mc F\: F\text{ satisfies  \eqref{f1}--\eqref{f9}}\}.
\end{align*}
Instead of the abstract description of $\mc F^\ast$ given above, readers may simply
think of the system given in \eqref{eq: F intro} (for large $\kappa$) in all of the following.
This is justified by the next statement.
\begin{lem}[{see \cite[Lemma 4.2]{GroegerJaeger2013SNADimensions}}]\label{lem:1}
Consider $F_{\kappa}$ as in \eqref{eq: F intro} and let $\rho$ satisfy the
Diophantine condition \eqref{f4} for some $c>0$ and $d>1$.  There exists a
constant $\kappa_{0}=\kappa_{0}(c,d, D)$ such that for all
$\kappa\geq\kappa_{0}$, the map $F_{\kappa}$ satisfies \eqref{f1}--\eqref{f9}
(with appropriately chosen constants $\alpha,\gamma,\beta,L_0,m,a,b,c$).
\end{lem}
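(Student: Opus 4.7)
The plan is to verify the conditions \eqref{f1}--\eqref{f9} one by one, choosing the parameters $\alpha,\beta,\gamma,L_0,m,a,b$ in an order that respects their interdependencies; all of them are either absolute or depend only on $c,d,D$, while the freedom in $\kappa$ is used at the end to absorb error terms. Throughout, I would rely on the explicit form $F_\kappa(\theta,x)=(\theta+v,\tanh(\kappa x)g(\theta))$ with $g(\theta)=\tfrac{1}{D}\sum_{i=1}^D\sin(\pi\theta_i)$, together with two elementary facts about $g$: on $\T^D=(\R/\Z)^D$ one has $\sin(\pi\theta_i)\ge 2\,d(\theta_i,0)$, hence
\[
  g(\theta)\ge \tfrac{2}{D}\cdot d(\theta,\theta_*) \qquad\text{for all }\theta\in\T^D,
\]
and $g$ is globally $\pi$-Lipschitz with $0\le g\le 1$.

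First, I would dispose of the easy conditions. Setting $\alpha=\kappa$, the bound $|\partial_xF_\kappa|=\kappa\operatorname{sech}^2(\kappa x)g(\theta)\le\kappa$ yields \eqref{f1}; taking $\beta=\pi$ and using $0\le\tanh\le 1$ gives \eqref{f3}. For \eqref{f2}, on $[L_0,1]$ one has $|\partial_xF_\kappa|\le 4\kappa e^{-2\kappa L_0}$, so choosing any $\gamma>0$ and $L_0$ of order $\log\kappa/\kappa$ makes the right-hand side smaller than $\kappa^{-\gamma}=\alpha^{-\gamma}$ once $\kappa$ is large. Condition \eqref{f4} is exactly the standing Diophantine hypothesis on $v$. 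For \eqref{f5}--\eqref{f8} I would pick $m\in\N$ with $m>22(1+1/\gamma)$, set $a=(m+1)^d$, and let $b=\min\{c,\tfrac12\min_{1\le n\le m-1}d(\tau_n,\theta_*)\}$, which is strictly positive by total irrationality of $v$. All of $m,a,b$ depend only on $c,d,D$ (and on the fixed $\gamma$); crucially, they do not grow with $\kappa$.

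The bulk of the work lies in \eqref{f9}. Using $g(\theta)\ge\tfrac{2}{D}d(\theta,\theta_*)$, it suffices to control $\tanh(\kappa x)$ from below by a suitable linear expression. Concavity of $\tanh$ yields
\[
  \tanh(\kappa x)\ge \frac{\tanh(2\kappa L_0/a)}{2L_0/a}\cdot x \text{ for } x\in[0,2L_0/a], \qquad \tanh(\kappa x)\ge \tanh(2\kappa L_0/a) \text{ for } x\ge 2L_0/a.
\]
With $L_0\sim\log\kappa/\kappa$, the quantity $2\kappa L_0/a$ grows like $\log\kappa$, so $\tanh(2\kappa L_0/a)\to 1$ while the slope $\tanh(2\kappa L_0/a)/(2L_0/a)\to\infty$. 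Splitting $\min\{2L_0,ax\}$ and $\min\{1,(2/b)d(\theta,\theta_*)\}$ into their four sign combinations reduces \eqref{f9} to a finite list of inequalities (e.g.\ in the worst case $x\le 2L_0/a$, $d(\theta,\theta_*)\le b/2$, the required bound becomes $\tanh(\kappa x)\ge (aD/b)x$), each of which holds once $\kappa\ge \kappa_0(c,d,D)$ is large enough.

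The main obstacle is the trade-off hidden in the choice of $L_0$: condition \eqref{f2} forces $L_0$ to be at least of order $\log\kappa/\kappa$, whereas the case analysis of \eqref{f9} works comfortably only while $L_0$ is much smaller than any fixed constant. Reconciling these is possible precisely because $\operatorname{sech}^2(\kappa L_0)$ decays \emph{exponentially} in $\kappa L_0$, so a logarithmically small $L_0$ already suffices for \eqref{f2}, while the structural constants $a,b,2/D$ entering \eqref{f9} remain $\kappa$-independent and are therefore eventually dominated by the linear growth of the $\tanh$-slope at the origin.
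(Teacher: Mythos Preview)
The paper does not give its own proof of this lemma; it is quoted verbatim from \cite[Lemma~4.2]{GroegerJaeger2013SNADimensions} and left unproved here. Your sketch is a correct direct verification and is in the spirit of that reference: fix $\gamma$ first, then $m,a,b$ via \eqref{f5}--\eqref{f8} (using the Diophantine bound \eqref{f4} to make $b$ depend only on $c,d$ rather than on the specific rotation $v$, so that the eventual threshold $\kappa_0$ does too), set $\alpha=\kappa$, $\beta=\pi$, let $L_0$ be of order $(\log\kappa)/\kappa$, and finally use the concavity/monotonicity of $\tanh$ together with $g(\theta)\ge \tfrac{2}{D}d(\theta,\theta_*)$ to split \eqref{f9} into finitely many inequalities that are all satisfied once $\kappa$ is large. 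One small imprecision: what eventually dominates the $\kappa$-independent constant $aD/b$ in your ``worst case'' is not the slope $\kappa$ at the origin but the secant slope $\tanh(2\kappa L_0/a)/(2L_0/a)\sim a\kappa/((\gamma+1)\log\kappa)$, which still tends to infinity; this does not affect the argument.
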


Note that as $[0,1]\ni x\mapsto F_\theta'(x)$ is continuous for each $\theta \in \T^D$ (due to $(\mathcal{F}_{2})$), the mean value theorem and \eqref{f9} imply
\begin{align}
\label{f10}
\tag{F10}
F_\theta'(0)\geq a\cdot \min\left\{1,\frac2b d(\theta,\theta_*)\right\} \qquad &\text{for all } \theta\in \T^D.
\end{align}
While \eqref{f10} yields the existence of positive finite-time Lyapunov exponents on the zero line (see Lemma~\ref{lem: positive lyapunov exponent on zero line} below), in order to ensure big enough lower bounds on the probability of positive finite-time Lyapunov exponents outside the zero line, we additionally assume that for all $\delta>0$ there is $x_\delta>0$ with
\begin{align}
\label{f11}
\tag{F11}
F_\theta'(x)\geq  (1-\delta)\cdot F_\theta'(0) \qquad &\text{for all } x\in[0,x_\delta] \text{ and all } \theta\in \T^D.
\end{align}
Clearly, this additional assumption is satisfied by \eqref{eq: F intro} (for all $\kappa,D$ and $\rho$).

\section{Rigorous bounds on the probability of positive finite-time Lyapunov exponents}\label{sec: proof of main result}
In this section, we show that within the class of pinched skew-products,
the $\mu_{\phi^+}$-measure of points $(\theta,x)$ with $\lambda_N(\theta,x)\geq 0$ decays exponentially as $N\to\infty$.\footnote{Recall that $\phi^+$ refers to the upper boundary graph 
of the global attractor, see Section~\ref{preliminaries}.}

We start by deriving a lower bound for this probability.  To that end, we first
need to study the occurrence of positive finite-time Lyapunov exponents on the
zero line.  Due to \eqref{f10}, this essentially amounts to analysing the
frequency of visits of points $\theta\in \T^D$ to the vicinity of $\theta_*$.

For $j\in \N$, set
\begin{align}\label{eq: defn R and r}
    r_{j}=\frac{b}{2}a^{-(j-1)}\qquad \text{and} \qquad
    R_{j}=\frac{b}{2}a^{\frac{-(j-1)}{m}}.
\end{align}

\begin{prop}
Suppose \eqref{f4}--\eqref{f8} are satisfied.
 Then, for $n\in\{1,\ldots,m-1\}$, we have
\[
 B_{r_1}(\theta_*)\cap (B_{r_1}(\theta_*)+n\rho)= \emptyset.
\]
Similarly, for $j\geq 2$ and $n\in\{1,\ldots, (m+1)^{(j-1)}\}$, we have
\[
 B_{r_j}(\theta_*)\cap (B_{r_j}(\theta_*)+n\rho)= \emptyset.
\]
\end{prop}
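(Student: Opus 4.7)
The plan is to translate the disjointness of the two balls into a lower bound on the return distance $d(\tau_n,\theta_*)$ (with $\tau_n=\theta_*+n\rho$) and then to verify this lower bound by an elementary case distinction, using the Diophantine estimate \eqref{f4} together with \eqref{f6}, \eqref{f7}, \eqref{f8}. The key observation is that $B_{r_j}(\theta_*)+n\rho=B_{r_j}(\tau_n)$, so that the two balls are disjoint if and only if $d(\tau_n,\theta_*)\geq 2r_j$. Thus it suffices to check this inequality in each of the stated ranges of $n$.

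For $j=1$ we have $2r_1=b$, and the required inequality $d(\tau_n,\theta_*)>b$ for $n\in\{1,\ldots,m-1\}$ is precisely assumption \eqref{f8}. Hence this case is immediate and needs no further input.

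For $j\geq 2$ we have $2r_j=b\cdot a^{-(j-1)}$, and we need to show
\[
 d(\tau_n,\theta_*)\ \geq\ b\cdot a^{-(j-1)}\qquad\text{for all } n\in\{1,\ldots,(m+1)^{j-1}\}.
\]
By \eqref{f4} and \eqref{f7} we may estimate $d(\tau_n,\theta_*)\geq c\cdot n^{-d}\geq b\cdot n^{-d}$, so it is enough to verify $n^{-d}\geq a^{-(j-1)}$, i.e.\ $n^{d}\leq a^{j-1}$, or equivalently $n\leq a^{(j-1)/d}$. By \eqref{f6}, $a^{1/d}\geq m+1$, whence $a^{(j-1)/d}\geq (m+1)^{j-1}\geq n$ for every $n$ in the stated range. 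This closes the argument.

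There is no serious obstacle here: the statement is essentially an accounting exercise showing that the constants in \eqref{f4}--\eqref{f8} were chosen in precisely the right proportions. The only small subtlety is to notice that the Diophantine bound from \eqref{f4} already suffices for $j\geq 2$ (so that \eqref{f8} plays a role only in the base case $j=1$), and to handle strict versus non-strict inequalities carefully — but since the balls are open, $d(\tau_n,\theta_*)\geq 2r_j$ is enough for disjointness.
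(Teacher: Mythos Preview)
Your proof is correct and follows essentially the same approach as the paper: reduce disjointness to the inequality $d(\tau_n,\theta_*)\geq 2r_j$, handle $j=1$ directly via \eqref{f8}, and for $j\geq 2$ combine the Diophantine bound \eqref{f4} with \eqref{f6} and \eqref{f7}. The only cosmetic difference is that the paper phrases the case $j\geq 2$ as a proof by contradiction, while you argue directly.
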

\begin{proof}
We only discuss $j\geq 2$.
With \eqref{f8}, the other case is obvious.

Suppose $B_{r_j}(\theta_*)\cap (B_{r_j}(\theta_*)+n\rho)\neq \emptyset$ for some
$n$, that is, $d(\theta_*,\tau_n)<2r_j=ba^{-(j-1)}$.  Note that \eqref{f4} gives
$d(\theta_*,\tau_n)\geq c\cdot n^{-d}$.  Therefore, $ba^{-(j-1)}> c\cdot n^{-d}$
and thus, $n> (c/b)^{1/d}\cdot a^{(j-1)/d}\geq (m+1)^{j-1}$, where we used
\eqref{f6} and \eqref{f7} in the last step.
\end{proof}
This immediately gives
\begin{cor}
 Assume \eqref{f4}--\eqref{f8} and let $\theta\in \T^D$.  Suppose $n_1<n_2 \in
 \N$ are such that $\theta+n_1\rho\in B_{r_j}(\theta_*)$ and $\theta+n_2\rho\in
 B_{r_j}(\theta_*)$.  If $j=1$, then $n_2-n_1\geq m$ and if $j\geq 2$, then
 $n_2-n_1\geq (m+1)^{j-1}$.
\end{cor}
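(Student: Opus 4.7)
The corollary looks like a clean contrapositive of the preceding proposition, so my plan is to derive it by exactly such a translation argument. Set $n=n_2-n_1$; then $n\geq 1$, and the identity $\theta+n_2\rho=(\theta+n_1\rho)+n\rho$ shows that $\theta+n_2\rho$ lies in the translated ball $B_{r_j}(\theta_*)+n\rho$ (since $\theta+n_1\rho\in B_{r_j}(\theta_*)$ by assumption) and also in $B_{r_j}(\theta_*)$ itself (by the second hypothesis). Hence the intersection
\[
B_{r_j}(\theta_*)\cap\bigl(B_{r_j}(\theta_*)+n\rho\bigr)
\]
is non-empty, since it contains $\theta+n_2\rho$.

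Now I invoke the proposition in contrapositive form. In the case $j=1$, non-emptiness of the above intersection rules out $n\in\{1,\dots,m-1\}$, forcing $n_2-n_1=n\geq m$. In the case $j\geq 2$, the proposition excludes $n\in\{1,\dots,(m+1)^{j-1}\}$, which gives $n\geq (m+1)^{j-1}+1$ and in particular the claimed bound $n_2-n_1\geq (m+1)^{j-1}$. No further input is needed beyond the observation that translation by $n\rho$ on $\T^D$ preserves the metric, so that a ball around $\theta_*+n\rho$ is just the translate of the ball around $\theta_*$. The only real care is in keeping track of which direction the translate goes; everything else is immediate, so I do not anticipate any genuine obstacle in carrying this out.
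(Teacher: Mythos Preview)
Your argument is correct and is exactly the immediate contrapositive the paper has in mind when it writes ``This immediately gives'' before the corollary; you even observe that for $j\geq 2$ the proposition actually yields the slightly sharper bound $n_2-n_1\geq (m+1)^{j-1}+1$. The remark about isometry of the translation is true but not needed, since both the proposition and your intersection are already phrased in terms of the set translate $B_{r_j}(\theta_*)+n\rho$.
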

\begin{lem}\label{lem: positive lyapunov exponent on zero line}
 Suppose $F\in \mc F^*$ and $N\in \N$.  For each $\theta\in B_{r_N}(\theta_*)$,
 we have
 \[\lambda_N(\theta+\rho,0)\geq 1/2 \cdot\log a.\]
\end{lem}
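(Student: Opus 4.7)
The plan is to use the invariance of the zero line. Since $F_\theta(0)=0$ for every $\theta$, the chain rule gives
\[
\lambda_N(\theta+\rho,0) \;=\; \frac{1}{N}\sum_{\ell=1}^N \log F'_{\theta+\ell\rho}(0),
\]
and \eqref{f10} bounds each summand below by $\log a+\log\min\{1,\tfrac{2}{b}d(\theta+\ell\rho,\theta_*)\}$. Thus, writing $S$ for the sum of these correction terms, it suffices to show $S\geq -\tfrac{N}{2}\log a$. This reduces the claim to controlling how often, and how closely, the orbit $\{\theta+\ell\rho\}_{\ell=1}^N$ approaches the pinched point $\theta_*$.

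To do so, I would decompose along the nested balls $B_{r_j}(\theta_*)$. For $j\geq 1$, set
\[
V_j \;=\; \bigl\{\ell\in\{1,\ldots,N\}\;:\;\theta+\ell\rho\in B_{r_j}(\theta_*)\bigr\}.
\]
If $\ell\in V_j\setminus V_{j+1}$ then $\tfrac{2}{b}d(\theta+\ell\rho,\theta_*)\geq \tfrac{2}{b}r_{j+1}=a^{-j}$, contributing at least $-j\log a$ to $S$; if $\ell\notin V_1$, the contribution is nonnegative since $d\geq r_1=b/2$. Abel summation then yields
\[
S \;\geq\; -\log a \cdot \sum_{j\geq 1}j\,|V_j\setminus V_{j+1}| \;=\; -\log a \cdot \sum_{j\geq 1}|V_j|.
\]

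The key step is to bound $|V_j|$ via the preceding Corollary. Since $\theta\in B_{r_N}(\theta_*)\subseteq B_{r_j}(\theta_*)$ for every $j\leq N$, I may apply the Corollary with $n_1=0$ and $n_2$ an arbitrary element of $V_j$: successive elements of $\{0\}\cup V_j$ are then spaced by at least $m$ (if $j=1$) and by at least $(m+1)^{j-1}$ (if $j\geq 2$). The same statement with $j=N$ forces $V_j=\emptyset$ as soon as $(m+1)^{j-1}>N$. Hence $|V_1|\leq N/m$ and $|V_j|\leq N/(m+1)^{j-1}$ for $2\leq j\leq N$, and summing the resulting geometric series gives
\[
\sum_{j\geq 1}|V_j| \;\leq\; \frac{N}{m}+\sum_{j\geq 2}\frac{N}{(m+1)^{j-1}} \;=\; \frac{2N}{m}.
\]
Plugging back, $S\geq -(2/m)N\log a$, whence $\lambda_N(\theta+\rho,0)\geq (1-2/m)\log a \geq \tfrac{1}{2}\log a$, where the last inequality uses $m\geq 4$, which is comfortably implied by \eqref{f5}.

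I expect the main subtlety to lie in lining up the Abel summation with the gap estimate from the Corollary (and in handling the mildly different constant for $j=1$); the rest is routine bookkeeping. No input beyond \eqref{f10}, the preceding Corollary, and \eqref{f5} is required.
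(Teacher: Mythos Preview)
Your proof is correct and follows essentially the same approach as the paper: both use \eqref{f10}, decompose along the nested balls $B_{r_j}(\theta_*)$, invoke the preceding Corollary to bound the number of visits to each $B_{r_j}$, and finish with \eqref{f5}. The only (cosmetic) difference is that you apply Abel summation to rewrite $\sum_{j\geq 1} j\,|V_j\setminus V_{j+1}|$ as $\sum_{j\geq 1}|V_j|$, yielding the cleaner bound $2N/m$, whereas the paper bounds each annular term by $j\,|V_j|$ directly and sums $1/m+\sum_{j\geq 2} j/(m+1)^{j-1}$; both are comfortably below $1/2$ under \eqref{f5}.
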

\begin{proof}
 Set $\Delta_j=(m+1)^{j-1}$.  By the previous corollary, given $\theta$ as in
 the assumptions and $2\leq j\leq N$, we have
 \begin{align*}
 \#\{\ell \in \{1,\ldots,N\}\:\theta+\ell \rho\in B_{r_j}(\theta_*)\}\leq \lfloor N/\Delta_j\rfloor
 \end{align*}
 and
 \begin{align*}
 \#\{\ell \in \{1,\ldots,N\}\:\theta+\ell \rho\in B_{r_1}(\theta_*)\}\leq \lfloor N/m\rfloor.
 \end{align*}
  Note further that for $j\geq 0$, \eqref{f10} gives
\begin{align*}
 F_{\theta+\ell\rho}'(0)\geq   a\cdot \frac2b r_{j+1}=a^{-(j-1)} \qquad \text{whenever } \theta+\ell\rho\in B_{r_j}(\theta_*)\setminus B_{r_{j+1}}(\theta_*),
\end{align*}
where---for notational convenience---$r_0=\sqrt{D}$ and hence
$B_{r_0}(\theta_*)=\T^D$.  We therefore have
\begin{align*}
 &\lambda_N(\theta+\rho,0) =1/N\cdot \sum_{\ell=1}^N\log
  F'_{\theta+\ell\rho}(0)\geq 1/N\cdot \sum_{\ell=1}^N\sum_{j\geq 0} \log
  a^{-(j-1)} \cdot \mathbf{1}_{B_{r_j}(\theta_*)\setminus
    B_{r_{j+1}}(\theta_*)}(\theta+\ell \rho)\\ &= 1/N\cdot
  \sum_{\ell=1}^N\sum_{j\geq 0} (1-j)\log a \cdot
  \mathbf{1}_{B_{r_j}(\theta_*)\setminus B_{r_{j+1}}(\theta_*)}(\theta+\ell
  \rho) \\ & =\log a -\log a\cdot 1/N\cdot \sum_{j\geq 1}j\cdot \sum_{\ell=1}^N
  \mathbf{1}_{B_{r_j}(\theta_*)\setminus B_{r_{j+1}}(\theta_*)}(\theta+\ell
  \rho) \\ & \geq \log a -\log a\cdot 1/N\cdot (\lfloor N/m\rfloor+\sum_{j\geq
    2} j\cdot \lfloor N/\Delta_j\rfloor) \geq \log a -\log a \cdot
  (1/m+\sum_{j\geq 2} j/\Delta_j)\\ &= \log a -\log a \cdot (1/m+\sum_{j\geq 2}
  j/(m+1)^{j-1})\geq 1/2\cdot \log a,
\end{align*}
where we used \eqref{f5} in the last step.
\end{proof}
In order to prove the lower bound in Theorem~\ref{thm: main}, it remains to show
that the positive finite-time Lyapunov exponents on the zero line are observable
not only \emph{on} but already \emph{close to} the zero line.  This is what the
proof of the next statement is about.
\begin{theorem}\label{thm: lower bound on probabilities}
  Suppose $F\in \mc F^*$ satisfies \eqref{f11}.  Then there is $\gamma_+>0$ such
  that for all $N\in \N$
 \[
 \mu_{\phi^+}\{(\theta,x)\in\mathbb{T}^D\times [0,1]\:\lambda_{N}(\theta,x)\geq
 0\}\geq e^{-\gamma_+ N}.
 \]
\end{theorem}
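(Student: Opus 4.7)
The plan is to exploit the exponent bound on the zero line from Lemma~\ref{lem: positive lyapunov exponent on zero line} and to transfer it to the physical measure $\mu_{\phi^+}$. Since $\mu_{\phi^+}$ is the push-forward of $\mathrm{Leb}_{\T^D}$ under $\theta \mapsto (\theta, \phi^+(\theta))$, it suffices to exhibit a set $G_N \ssq \T^D$ with $\mathrm{Leb}_{\T^D}(G_N) \geq e^{-\gamma_+ N}$ and $\lambda_N(\theta, \phi^+(\theta)) \geq 0$ for every $\theta \in G_N$.

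Fix an integer $k$ with $a^k > \alpha$ and set $G_N := \rho + B_{r_{kN}}(\theta_*)$. For $\xi = \theta + \rho \in G_N$, we have $\theta \in B_{r_{kN}}(\theta_*) \ssq B_{r_N}(\theta_*)$, so Lemma~\ref{lem: positive lyapunov exponent on zero line} gives $\lambda_N(\xi, 0) \geq \tfrac{1}{2}\log a$. At the same time, the pinching identity $F_{\theta_*} \equiv 0$ combined with \eqref{f3} yields
\begin{equation*}
\phi^+(\xi) \;=\; F_{\xi - \rho}(\phi^+(\xi - \rho)) \;\leq\; \beta\, d(\xi - \rho, \theta_*) \;\leq\; \beta\, r_{kN}.
\end{equation*}
Invariance of $\phi^+$ and iteration of \eqref{f1} (together with $F_\theta(0) = 0$) then give $\phi^+(\xi + \ell \rho) = F^{\ell}_\xi(\phi^+(\xi)) \leq \alpha^{\ell} \beta\, r_{kN}$ for every $\ell \geq 0$; since $a^k > \alpha$, this upper bound decays geometrically in $N$, uniformly in $\ell \in \{0, \ldots, N-1\}$.

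Given $\delta \in (0, 1 - a^{-1/2})$, pick $N_0 \in \N$ so that the above bound is at most $x_\delta$ for all $N \geq N_0$ and all $\ell \in \{0, \ldots, N-1\}$. Then \eqref{f11} gives $F'_{\xi + \ell \rho}(\phi^+(\xi + \ell \rho)) \geq (1-\delta)\, F'_{\xi + \ell \rho}(0)$, whence
\begin{equation*}
\lambda_N(\xi, \phi^+(\xi)) \;\geq\; \lambda_N(\xi, 0) + \log(1-\delta) \;>\; 0
\end{equation*}
for every $\xi \in G_N$ and every $N \geq N_0$. Together with $\mathrm{Leb}_{\T^D}(G_N) \geq c\, a^{-DkN}$, this yields the claimed lower bound for any $\gamma_+ > D k \log a$ and all sufficiently large~$N$. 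The finitely many $N < N_0$ are absorbed by enlarging $\gamma_+$, using that $p_{\kappa, N} > 0$ for every $N$, as discussed in the introduction.

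The main obstacle is controlling the growth of $\phi^+$ along the orbit of $\xi$: the crude Lipschitz estimate $\phi^+(\xi + \ell \rho) \leq \alpha^\ell \phi^+(\xi)$ can overshoot the threshold $x_\delta$ well before time~$N$, since no relation is imposed between the Lipschitz constant $\alpha$ from~\eqref{f1} and the expansion rate $a$ appearing in~\eqref{f10}. Passing from $B_{r_N}(\theta_*)$ to the smaller ball $B_{r_{kN}}(\theta_*)$ for an integer $k > \log \alpha / \log a$ rectifies this at the price of a larger decay constant $\gamma_+$, which is admissible in the statement of the theorem.
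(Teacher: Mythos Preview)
Your argument is correct and follows the same route as the paper: restrict to a small ball around $\theta_*$, use the pinching together with \eqref{f3} to bound $\phi^+$ just after $\theta_*$, propagate this bound forward via \eqref{f1}, and then compare the derivative along $\phi^+$ to that along the zero line via \eqref{f11} and Lemma~\ref{lem: positive lyapunov exponent on zero line}.

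Two small points. First, contrary to your last paragraph, a relation between $\alpha$ and $a$ \emph{is} forced by the hypotheses: combining \eqref{f1} with \eqref{f9} near $x=0$ (and away from $\theta_*$) gives $\alpha\geq a$. The paper exploits this by taking the radius $\tilde r_N=x_\delta\,\beta^{-1}\alpha^{-(N-1)}\leq r_N$ directly, so that $\phi^+(\theta+j\rho)\leq\alpha^{j-1}\beta\,\tilde r_N\leq x_\delta$ holds for \emph{every} $N$; this avoids both your auxiliary factor $k$ and the threshold $N_0$. Second, your treatment of $N<N_0$ appeals to $p_{\kappa,N}>0$ ``as discussed in the introduction'', but that discussion is specific to the model~\eqref{eq: F intro}, not to a general $F\in\mc F^*$. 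To cover small $N$ in the abstract setting, note that your own argument, run with a sufficiently small ball (radius $\min\{r_N,\,x_\delta\beta^{-1}\alpha^{-(N-1)}\}$, say), already yields a positive lower bound for each fixed $N$, which can then be absorbed into $\gamma_+$.
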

\begin{proof}
Choose $\delta>0$ small enough such that
\begin{align}\label{eq: derivative in strip around zero line}
 \log (1-\delta)>-(\log a)/4
\end{align}
and let $x_\delta$ be such that \eqref{f11} holds true.  Without loss of
generality, we may assume that $x_\delta\leq \beta b/2$ (with $\beta$ from
\eqref{f3}).  For $N\in \N$, set $\tilde r_N=x_\delta/\beta \cdot
\alpha^{-(N-1)}$.  Observe that $\alpha\geq a$ (because of \eqref{f1} and
\eqref{f9}) so that $\tilde r_N\leq r_N$ for all $N$.  We first show that for
$\theta \in B_{\tilde r_N}(\theta_*)$ and $j=1,\ldots,N$, we have
$\phi^+(\theta+j\rho)\leq x_\delta$.

To that end, observe that the monotonicity of the sequence of the iterated upper
boundary lines $(\phi_n)_{n\in \N}$ (recall \eqref{eq: defn iterated boundary
  lines}) and \eqref{f3} yield
\[
 \phi^+(\theta+\rho)=\lim_{n\to\infty} \phi_n(\theta+\rho)\leq F_\theta(1)\leq \beta \cdot d(\theta,\theta_*).
\]
Therefore, given $\theta \in B_{\tilde r_N}(\theta_*)$ and $j=1,\ldots,N$, we
have---due to $\mathcal{F}_{1}$ and \eqref{f1}---that
\[
\phi^+(\theta+j\rho)= F^{j-1}_{\theta+\rho}\left(\phi^+(\theta+\rho)\right)\leq
F^{j-1}_{\theta+\rho}(\beta\cdot d(\theta,\theta_*))\leq\alpha^{j-1}\beta \cdot
d(\theta,\theta_*)\leq x_\delta.
\]
As a consequence, Lemma~\ref{lem: positive lyapunov exponent on zero line} and
\eqref{eq: derivative in strip around zero line} in conjunction with \eqref{f11}
give that $\lambda_N(\theta+\rho,x)\geq (\log a)/4$ for all
$(\theta,\phi^{+}(\theta))$ with $\theta\in B_{\tilde r_N}(\theta_*)$.  The
statement follows.
\end{proof}

Having thus seen how within $\mc F^*$ (under the additional assumption of
$\eqref{f11}$) the probability of positive finite-time Lyapunov exponents decays
at most exponentially, we next come to show that this decay is, in fact, not
slower than exponential.

Before we turn to the rigorous analysis, we briefly explain its idea on an
intuitive level.  First, note that \eqref{f2} implies that above $L_0$, fibres
are contracted---we emphasize this fact by calling $\T^D\times [L_0,1]$ the
\emph{contracting region}.  In other words, visits to $\T^D\times [L_0,1]$
contribute negatively to the (finite-time) Lyapunov exponent of an orbit.
Second, \eqref{f9} enables us to control the number of times an orbit spends
outside of the contracting region.  Finally, since \eqref{f1} gives an upper
bound for the possible fibre-wise expansion, the control obtained through
\eqref{f9} enables us to ensure an overall contraction, that is, a negative
(finite-time) Lyapunov exponent, along \emph{most} finite orbits.

Let us specify this control in quantitative terms by collecting two auxiliary
statements from \cite{GroegerJaeger2013SNADimensions}.  Given
$\theta\in\mathbb{T}^{D}$ and $n\in\mathbb{N}$, let
$\theta_{k}:=\rho^{k-n}(\theta)$ and $x_{k}:=\varphi_k(\theta_{k})$ for $0\leq
k\leq n$.  Note that $\varphi_{k}(\theta_{k})=F^{k}_{\theta_{0}}(1)$ and
$\varphi_{n}(\theta)=F^{n-k}_{\theta_{k}}(x_{k})$.  Let \begin{align*}
  s^{n}_{k}:=\#\{k\leq j < n\:x_{j}<2L_{0} \}
\end{align*}
and set $s_{n}^{n}(\theta)=0$.  Recall the definition of $R_j$ in \eqref{eq:
  defn R and r}.
 \begin{lem}[{\cite[Lemma 4.6]{GroegerJaeger2013SNADimensions}}]\label{lem:2}
 \label{lemma2}
 Let $F\in\mathcal{F}^{*}$ and $q,~n\in\mathbb{N}$ with $n\geq mq+1$.  Suppose
 that $\theta\notin \bigcup\limits_{j=q}^{n}B_{R_{j}}(\tau_{j})$ and consider
 $t\geq mq$.  Then
 \begin{align*}
      s^{n}_{n-t}(\theta)\leq\frac{11t}{m}.
 \end{align*}
 \end{lem}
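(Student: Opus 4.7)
The plan is to bound the low-visit count $s^n_{n-t}(\theta)$ by exploiting the multiplicative lower bound \eqref{f9} along low-lying orbit segments together with the geometric exclusion hypothesis on~$\theta$. Writing $\theta_k = \theta + (k-n)\rho$ and $\tau_j = \theta_* + j\rho$, the assumption $\theta \notin \bigcup_{j=q}^n B_{R_j}(\tau_j)$ is equivalent to $d(\theta_k, \theta_*) \geq R_{n-k}$ for all $k \in \{0, \ldots, n-q\}$; the complementary end window $\{n-q+1, \ldots, n-1\}$ has only $q-1 \leq t/m$ indices (since $t \geq mq$) and so contributes at most $t/m$ to the low count, which I would absorb into the final constant.

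Next, I would segment $[n-t, n-q]$ into maximal \emph{low runs} $[k_1, k_2]$ (on which $x_k < 2L_0$) separated by ``high'' intervals. By \eqref{f9}, $F_\theta(x) \geq 2L_0$ whenever $x \geq 2L_0/a$ and $d(\theta, \theta_*) \geq b/2$; hence a transition from high to low at time $k_1$ forces $\theta_{k_1-1} \in B_{b/2}(\theta_*) = B_{R_1}(\theta_*)$, and moreover $x_{k_1} \leq \beta\, d(\theta_{k_1-1}, \theta_*)$ via \eqref{f3} and $F_{\theta_*} \equiv 0$. Iterating \eqref{f9} across the run yields
\[
 x_{k_2} \;\geq\; x_{k_1} \cdot a^{k_2-k_1} \cdot \prod_{j=k_1}^{k_2-1} \min\left\{1,\, \tfrac{2}{b}\, d(\theta_j, \theta_*)\right\},
\]
and combining with $x_{k_2} < 2L_0$ converts the run length $\ell = k_2 - k_1 + 1$ into a control on the log-deficit $-\sum_j \log\min\{1, (2/b)\, d(\theta_j, \theta_*)\}$ produced by near-visits to $\theta_*$ during the run.

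Finally, I would carry out a scale-wise book-keeping: stratify the base-orbit times by the dyadic scale $j$ at which $d(\theta_k, \theta_*) \in [R_{j+1}, R_j)$, so that a scale-$j$ visit produces a log-deficit of order $(j/m)\log a$ (via $R_j = (b/2) a^{-(j-1)/m}$). The Diophantine condition \eqref{f4} together with \eqref{f6}--\eqref{f8}---by the same argument as in the corollary preceding Lemma~\ref{lem: positive lyapunov exponent on zero line}---ensures that successive visits of the base orbit to a small ball around $\theta_*$ are well-spaced. Combining the length estimates from the low-run analysis with these visit-frequency bounds and summing a convergent geometric series over scales produces the claimed upper bound $11\, t / m$.

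The main obstacle is precisely this final step: one must simultaneously track (i) how deeply the base orbit can approach $\theta_*$ at any given time (as constrained by the exclusion hypothesis), (ii) how much of the logarithmic growth budget each approach consumes, and (iii) how these combine across many runs to yield a bound of order $t/m$ rather than merely~$t$. The numerical constant $11$ emerges from a careful calibration of the exponent $1/m$ in $R_j$ against the spacing $(m+1)^{j-1}$ from the preceding corollary, and its specific value feeds via \eqref{f5} into the subsequent estimates that ensure overall fiber-wise contraction.
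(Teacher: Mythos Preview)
The paper does not supply its own proof of this lemma: it is quoted directly from \cite[Lemma~4.6]{GroegerJaeger2013SNADimensions} and used as a black box, so there is no in-paper argument to compare against. That said, your outline does match the mechanism of the cited proof---translate the exclusion hypothesis into lower bounds $d(\theta_k,\theta_*)\geq R_{n-k}$, decompose $[n-t,n)$ into maximal low runs, use \eqref{f9} to propagate a multiplicative lower bound along each run, and control the accumulated logarithmic deficit via the Diophantine return-time estimates summed over scales.

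One point in your sketch is written the wrong way round. To bound the length $\ell$ of a low run $[k_1,k_2]$ from the inequality
\[
  2L_0 > x_{k_2} \;\geq\; x_{k_1}\cdot a^{\ell-1}\cdot \prod_{j=k_1}^{k_2-1}\min\!\left\{1,\tfrac{2}{b}\,d(\theta_j,\theta_*)\right\},
\]
you need a \emph{lower} bound on $x_{k_1}$, not the upper bound $x_{k_1}\leq \beta\,d(\theta_{k_1-1},\theta_*)$ coming from \eqref{f3}. The correct input here is again \eqref{f9}: since $x_{k_1-1}\geq 2L_0$ (last high point before the run), one gets
\[
  x_{k_1}=F_{\theta_{k_1-1}}(x_{k_1-1})\;\geq\; 2L_0\cdot \min\!\left\{1,\tfrac{2}{b}\,d(\theta_{k_1-1},\theta_*)\right\},
\]
which makes $\log(2L_0/x_{k_1})$ just one more scale-$j$ deficit term (for the visit at time $k_1-1$) to be absorbed into your scale-wise bookkeeping. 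With this correction, the remainder of your plan---summing the deficits over scales using the spacing bounds from the corollary and collecting the constants against \eqref{f5}---is exactly how the constant $11/m$ arises in the cited reference.
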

 
As discussed in Section~\ref{preliminaries}, the iterated upper boundary lines
$\phi_n$ approximate the graph $\phi^+$ whose measure $\mu_{\phi^+}$ we are
interested in.  The next statement effectively provides numerical bounds for
this approximation.
\begin{prop}[{\cite[Proposition 4.4]{GroegerJaeger2013SNADimensions}}]\label{prop1}
Let $F\in \mathcal{F}^{*}$, $q\in\mathbb{N}$ and
$\eta=\gamma-\frac{11}{m}(1+\gamma)>0$. Then, for $n\geq mq+1$ and $\theta\notin
\bigcup\limits_{j=q}^{n}B_{R_{j}}(\tau_{j})$, we have that
$\left|\varphi_{n}(\theta)-\varphi_{n-1}(\theta)\right|\leq
\alpha^{-\eta(n-1)}$.
\end{prop}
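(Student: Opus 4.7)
The plan is to write $\phi_n(\theta)-\phi_{n-1}(\theta)$ as the result of applying the same $(n-1)$-fold composition of fibre maps to two different initial conditions, and then control the resulting telescoping product by splitting the orbit into steps inside and outside the contracting region. With $\theta_k=\rho^{k-n}(\theta)$ as in the preamble to Lemma~\ref{lem:2}, we have $\phi_n(\theta)=F^{n-1}_{\theta_1}\bigl(F_{\theta_0}(1)\bigr)$ and $\phi_{n-1}(\theta)=F^{n-1}_{\theta_1}(1)$. Setting $y_k=F^{k-1}_{\theta_1}(1)$ and $z_k=F^{k-1}_{\theta_1}\bigl(F_{\theta_0}(1)\bigr)$ for $k=1,\dots,n$, one checks immediately that $z_k=x_k$ (in the notation of Lemma~\ref{lem:2}), that the step from $k$ to $k+1$ is given by applying $F_{\theta_k}$, and that monotonicity ($\mc F_1$) gives $z_k\le y_k$ throughout.

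The key dichotomy is the following. Whenever $x_k\ge 2L_0$, the whole interval $[z_k,y_k]$ lies in $[L_0,1]$, so \eqref{f2} yields $|y_{k+1}-z_{k+1}|\le \alpha^{-\gamma}|y_k-z_k|$; on all remaining steps, \eqref{f1} gives the crude bound $|y_{k+1}-z_{k+1}|\le \alpha\,|y_k-z_k|$. Writing $s=s_1^n(\theta)$ for the number of indices $k\in\{1,\dots,n-1\}$ with $x_k<2L_0$ and telescoping from $k=1$ to $k=n-1$, we obtain
\[
|\phi_n(\theta)-\phi_{n-1}(\theta)|\;\le\;\alpha^{s(1+\gamma)-\gamma(n-1)}\cdot |1-F_{\theta_0}(1)|\;\le\;\alpha^{s(1+\gamma)-\gamma(n-1)}.
\]
The hypotheses of the proposition match those of Lemma~\ref{lem:2} with $t=n-1$: the condition $n\ge mq+1$ gives $t\ge mq$, and the exclusion of $\bigcup_{j=q}^n B_{R_j}(\tau_j)$ is exactly the same. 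Lemma~\ref{lem:2} then yields $s\le 11(n-1)/m$, and substituting this into the exponent produces $\alpha^{-\eta(n-1)}$ with $\eta=\gamma-(11/m)(1+\gamma)$, which is positive by \eqref{f5}.

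The main subtlety is verifying that the interval $[z_k,y_k]$ lies entirely in $[L_0,1]$ whenever $x_k\ge 2L_0$, so that \eqref{f2} is genuinely applicable at that step. The built-in gap between $L_0$ and the threshold $2L_0$ used to define $s_k^n$ is exactly what makes this comparison work: since $z_k=x_k\ge 2L_0$ and $y_k\le 1$, we have $[z_k,y_k]\subseteq[2L_0,1]\subseteq[L_0,1]$. Once this is checked, the rest of the argument is a clean bookkeeping of the telescoping one-step estimates driven by the counting provided by Lemma~\ref{lem:2}.
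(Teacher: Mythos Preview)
The paper does not give its own proof of this proposition; it is quoted verbatim from \cite{GroegerJaeger2013SNADimensions}. Your argument is correct and is essentially the standard one: write both $\phi_n(\theta)$ and $\phi_{n-1}(\theta)$ as images under the common composition $F^{n-1}_{\theta_1}$, track the pair $(z_k,y_k)=(x_k,F^{k-1}_{\theta_1}(1))$ step by step, apply \eqref{f2} at indices with $x_k\ge 2L_0$ and \eqref{f1} otherwise, and feed in the bound $s_1^n(\theta)\le 11(n-1)/m$ from Lemma~\ref{lem:2}. The identification $z_k=x_k$ together with $z_k\le y_k\le 1$ indeed places $[z_k,y_k]\subseteq[L_0,1]$ whenever $x_k\ge 2L_0$, so the use of \eqref{f2} is justified. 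One small remark: in your final paragraph you attribute this to the ``built-in gap'' between $L_0$ and $2L_0$, but your argument does not actually exploit that gap here (you only use $z_k=x_k\ge 2L_0>L_0$); the gap is genuinely needed elsewhere in the paper, namely in the proof of Theorem~\ref{thm: exponential decay wrt graph measure}, where one knows $\phi_k(\theta_k)\ge 2L_0$ and must conclude $\phi_n(\theta_k)\ge L_0$ via the Cauchy-type estimate~\eqref{eq1}.
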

\begin{remark}
 Observe that $\eta>0$ due to \eqref{f5} and note that $\eta$ is independent of
 $q$.
\end{remark}

Clearly, Proposition~\ref{prop1} gives that if $k,n\in\mathbb{N}$ satisfy $mq\leq k<n$ and $\theta\notin \bigcup\limits_{j=q}^{n}B_{R_{j}}(\tau_{j})$, then 
\begin{align}\label{eq1}
\begin{split}
\left|\varphi_{n}(\theta)-\varphi_{k}(\theta)\right|
\leq
\sum_{i=k+1}^{n}\left|\varphi_{i}(\theta)-\varphi_{i-1}(\theta)\right|
\leq \sum_{i=k+1}^{n}\alpha^{-\eta(i-1)}
\leq\frac{\alpha^{-\eta k}}{1-\alpha^{-\eta}}.
\end{split}
\end{align}
With the above statements, we are now in a position to prove the upper bound in Theorem~\ref{thm: main}.
\begin{theorem}\label{thm: exponential decay wrt graph measure}
  Suppose $F\in \mc F^*$.
  Then there is $\gamma_->0$ such that for all $N\in \N$
 \[
 \mu_{\phi^+}\{(\theta,x)\in\mathbb{T}^D\times [0,1]\:\lambda_{N}(\theta,x)\geq 0\}\leq e^{-\gamma_- N}.
 \]
\end{theorem}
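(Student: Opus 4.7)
The plan is to exploit the monotone approximation $\phi_n\searrow\phi^+$ and apply Lemma~\ref{lemma2} along these approximants. Since $\mu_{\phi^+}\{(\theta,x)\:\lambda_N(\theta,x)\ge 0\}=\textrm{Leb}_{\T^D}\{\theta\:\lambda_N(\theta,\phi^+(\theta))\ge 0\}$, it suffices to exhibit a small exceptional set $E_N\ssq\T^D$ off of which $\lambda_N(\theta,\phi^+(\theta))$ is strictly negative with a margin independent of $N$.

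Concretely, I would set $Q:=\lfloor N/m\rfloor$ and $E_N:=\bigcup_{j\ge Q}B_{R_j}(\tau_j)$. Since $R_j=\tfrac{b}{2}a^{-(j-1)/m}$, a geometric-series computation gives $\textrm{Leb}_{\T^D}(E_N)\lesssim R_Q^D\lesssim a^{-DN/m^2}$, which is already exponentially small in $N$ and invariant under any rotation of $\T^D$. Fix $\theta$ with $\theta+N\rho\notin E_N$ and an arbitrary $n\ge mQ+1$. Applying Lemma~\ref{lemma2} at the base point $\theta+N\rho$ with $q=Q$ and $t=N$ (admissible since $t\ge mq$) yields that among the $N$ values $\phi_{n-N+\ell}(\theta+\ell\rho)$, $\ell=0,\ldots,N-1$, at most $11N/m$ lie below $2L_0$. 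Combining this count with \eqref{f1} (bounding at most $11N/m$ factors by $\alpha$) and \eqref{f2} (bounding the remaining factors by $\alpha^{-\gamma}$), and unfolding the chain rule
\[
(F^N_\theta)'\bigl(\phi_{n-N}(\theta)\bigr)=\prod_{\ell=0}^{N-1}F'_{\theta+\ell\rho}\bigl(\phi_{n-N+\ell}(\theta+\ell\rho)\bigr),
\]
one obtains $(F^N_\theta)'(\phi_{n-N}(\theta))\le\alpha^{-\eta N}$ with $\eta=\gamma-\tfrac{11}{m}(1+\gamma)>0$, the same positive constant as in Proposition~\ref{prop1}.

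To finish, I would pass to the limit $n\to\infty$. By $\phi_{n-N+\ell}(\theta+\ell\rho)\searrow\phi^+(\theta+\ell\rho)$ and the joint continuity of $F'_\theta(x)$ granted by $(\mathcal{F}_{2})$, the finite product converges pointwise to $(F^N_\theta)'(\phi^+(\theta))$, so the uniform bound $\alpha^{-\eta N}$ is preserved. Hence $\lambda_N(\theta,\phi^+(\theta))\le-\eta\log\alpha$ whenever $\theta+N\rho\notin E_N$, which implies $\{\theta\:\lambda_N(\theta,\phi^+(\theta))\ge 0\}\ssq E_N-N\rho$; the estimate on $\textrm{Leb}_{\T^D}(E_N)$ then yields the claim with any $\gamma_-\in(0,D\log a/m^2)$, after absorbing multiplicative constants into a small reduction of the exponent.

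The main technical point to watch for is precisely this limit step: the estimate from Lemma~\ref{lemma2} must hold uniformly in $n$, which is why $E_N$ is defined as the \emph{full} union $\bigcup_{j\ge Q}$ rather than a finite truncation, and the continuity of $F'$ is essential to transfer the finite-$n$ derivative estimate to the actual attractor $\phi^+$. Everything else reduces to a careful counting argument and the geometric bookkeeping of the exponents $\alpha,\gamma,a,m$.
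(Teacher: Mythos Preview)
Your argument is correct and in fact streamlines the paper's proof. The paper fixes $n$ and estimates the factors $F'_{\theta_k}(\phi_n(\theta_k))$ for a \emph{fixed} approximant $\phi_n$; since Lemma~\ref{lemma2} only controls the positions of $x_k=\phi_k(\theta_k)$ (with varying index $k$), the paper then needs Proposition~\ref{prop1} (via \eqref{eq1}) to transfer the information $\phi_k(\theta_k)\ge 2L_0$ to $\phi_n(\theta_k)\ge L_0$. This forces the introduction of an auxiliary cut-off $k_0\ge\kappa q$ (with $\kappa$ chosen so that $\alpha^{-\eta\kappa}/(1-\alpha^{-\eta})<L_0$) and a bad set defined as a union over all $k\in\{k_0,\ldots,N\}$; the three scales $q,k_0,N$ are then tuned proportionally at the end. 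You bypass this by evaluating the chain rule along the \emph{diagonal} $\phi_{n-N+\ell}(\theta+\ell\rho)$, which are precisely the quantities $x_{n-N+\ell}$ counted by Lemma~\ref{lemma2}; so (F2) applies directly without any comparison step, and a single condition at $\theta+N\rho$ suffices. The payoff is a cleaner argument with one parameter $Q=\lfloor N/m\rfloor$ instead of three, no recourse to Proposition~\ref{prop1}, and a slightly better explicit rate ($\gamma_-$ close to $D\log a/m^2$ versus the paper's $\eps D\log a/(2m)$ with $\eps\ll 1/m$). One small point to make explicit: the application of Lemma~\ref{lemma2} requires $q=Q\ge 1$ and $n\ge\max(N,mQ+1)$, so the argument as stated only covers $N\ge m$; the finitely many remaining $N$ are handled by noting that $\int_{\T^D}\lambda_N(\theta,\phi^+(\theta))\,d\theta=\lambda(\phi^+)<0$ forces $p_N<1$, after which $\gamma_-$ can be reduced accordingly. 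The paper's proof, while more elaborate, carries the same caveat.
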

\begin{proof}
Note that it suffices to show the statement for sufficiently large $N$.

Let $N\in\mathbb{N}$ be given.  As $\phi^+$ is the pointwise limit of the
non-increasing sequence $\phi_n$ and due to the continuous differentiability of
the fibre maps (see $(\mathcal{F}_{2})$), we have that for each $\theta$
\begin{align*}
     \lambda_{N}(\theta_0,\varphi^{+}(\theta_0))&=\frac{1}{N}\sum_{k=0}^{N-1}\log \left|F_{\theta_{k}}^{\prime}(\varphi^{+}(\theta_{k}))\right|
      =  \lim_{n\rightarrow\infty}\frac{1}{N}\sum_{k=0}^{N-1}\log \left|F_{\theta_{k}}^{\prime}(\varphi_{n}(\theta_{k}))\right|,
\end{align*}
where---as above---$\theta_k=\rho^{k-N}(\theta)$.

In a first instance, our goal is to derive assumptions
on $\theta$ which ensure that the expression
$\frac{1}{N}\sum_{k=0}^{N-1}\log \left|F_{\theta_{k}}^{\prime}(\varphi_{n}(\theta_{k}))\right|$ is negative and bounded away from zero for $n\geq N$ and large enough $N$.
The statement then follows by showing that these assumptions are only violated in a set of exponentially small measure as $N\to \infty$.

We start by collecting a number of estimates which we will then combine to
obtain an upper bound for $\frac{1}{N}\sum_{k=0}^{N-1}\log
\left|F_{\theta_{k}}^{\prime}(\varphi_{n}(\theta_{k}))\right|$.  First, let
$\kappa\in\mathbb{N}$ be large enough such that $\kappa\geq m$ and $
\frac{\alpha^{-\eta \cdot \kappa}}{1-\alpha^{-\eta}}<L_{0}$.  Consider $k_0\in
\N$ with $k_0\geq \kappa q$ (for some $q\in \N$ which we may consider fixed for
now).  Then \eqref{eq1} gives that for every $n>k\geq k_0$ and
$\theta\notin\bigcup\limits_{j=q}^{n}B_{R_{j}}(\tau_{j})$, we obtain
$\left|\varphi_{n}(\theta)-\varphi_{k}(\theta)\right|<L_{0}$.  In particular, if
$\varphi_{k}(\theta)\geq2L_{0}$, then $\varphi_{n}(\theta)\geq L_{0}$.
Therefore, if $n\geq k$ and
$\theta_{k}\notin\bigcup\limits_{j=q}^{n}B_{R_{j}}(\tau_{j})$ for some $k\geq
k_0$ with $\phi_k(\theta_k)>2L_0$, \eqref{f2} yields
\begin{align}\label{eq: F prime in contracting region}
|F_{\theta_{k}}^{\prime}(\varphi_{n}(\theta_{k}))|\leq \alpha^{-\gamma}.
\end{align}

Second, observe that due to \eqref{f1}, we have
\begin{eqnarray}\label{birkhoff_sum}
 \sum_{k=0}^{N-1}\log
 \left|F_{\theta_{k}}^{\prime}(\varphi_{n}(\theta_{k}))\right|&\leq&\sum_{k=0}^{k_{0}+1}\log\alpha
 + \sum_{k=k_{0}}^{N-1}\log
 \left|F_{\theta_{k}}^{\prime}(\varphi_{n}(\theta_{k}))\right|.
 \end{eqnarray}

 Third, let $N_0=N_0(k_0)\in \N$ be the smallest integer such that
 $N_0-k_{0}\geq mk_0/\kappa \geq mq$.  Then, Lemma~\ref{lemma2} allows us to
 estimate the number of times for which $\varphi_{k}(\theta_{k})<2L_{0}$.
 Specifically, if $N\geq N_0$ and $k\in\{k_{0},\ldots,N-1\}$, we obtain for all
 $\theta\notin\bigcup\limits_{j=q}^{N}B_{R_{j}}(\tau_{j})$
 \begin{equation}\label{estimate}
    s_{k_{0}}^{N}(\theta) =s_{N-(N-k_{0})}^{N}(\theta)\leq \frac{11}{m}(N-k_{0}).
\end{equation} 

Observe that with \eqref{f1}, \eqref{estimate} and \eqref{eq: F prime in
  contracting region}, we get
\begin{align}
\begin{split}\label{right_sum}
&\sum_{k=k_{0}}^{N-1}\log\left|F_{\theta_{k}}^{\prime}(\varphi_{n}(\theta_{k}))\right|\leq
  s_{k_{0}}^{N}(\theta) \log \alpha -
  \left(N-k_0-s_{k_{0}}^{N}(\theta)\right)\gamma
  \log\alpha\\ &\leq\frac{11}{m}(N-k_{0}) \log\alpha
  -\gamma\cdot\left(N-k_{0}-\frac{11}{m}(N-k_{0})
  \right)\log\alpha\\ &=\left(\left(\gamma-\frac{11}{m}(1+\gamma)\right)k_{0}+\left(\frac{11}{m}(1+\gamma)-\gamma\right)N\right)\log\alpha
  = (\eta k_{0}-\eta N)\log\alpha
\end{split}
\end{align}
whenever $n\geq N\geq N_0$ and
$\theta_{k}\notin\bigcup\limits_{j=q}^{n}B_{R_{j}}(\tau_{j})$ for all
$k=k_0,\ldots,N$ and where $\eta$ is as in Proposition~\ref{prop1}.  Plugging
\eqref{right_sum} into \eqref{birkhoff_sum}, we obtain (under the same
assumptions as above)
\begin{align}\label{birkhoff2}
 \sum_{k=0}^{N-1}\log |F_{\theta_{k}}^{\prime}(\varphi_{n}(\theta_{k}))|&\leq
 (2+(\eta+1) k_{0}-\eta N)\log\alpha.
\end{align}
Now, as $\eta>0$, there is $\nu=\nu(\eta)>0$ such that for all $N\geq k_0/\nu$,
the right-hand side in \eqref{birkhoff2} is negative (so that for all $n\geq N$,
the left-hand side is negative and bounded away from $0$).  Note that we may
assume without loss of generality that $\nu$ is small enough to ensure $k_0/\nu
\geq N_0(k_0)$.

Set 
\[
B_{q,k_0,N}=\Big\{\theta\in
\mathbb{T}^{D}\:\theta_{k}\in\bigcup_{j=q}^{\infty}B_{R_{j}}(\tau_{j})
~\textrm{for some } k\in\{k_{0},\ldots, N\}\Big\}.
\]
Observe that \eqref{birkhoff2} holds whenever $\theta$ is in the complement of
the set $B_{q,k_0,N}$ (given $k_0\geq \kappa q$ and $n\geq N\geq N_0(k_0)$).
Note that
\begin{align*}
\begin{split}
&\operatorname{Leb}_{\mathbb{T}^{D}}(B_{q,k_0,N})\leq (N-k_0+1)\cdot
  \operatorname{Leb}_{\mathbb{T}^{D}}\left(\bigcup_{j=q}^{\infty}B_{R_{j}}(\tau_{j})\right)
  \\ & \leq
  (N-k_0+1)\cdot\zeta_{D}\cdot\left(\frac{b}{2}\right)^D\sum_{j=q}^{\infty}a^{\frac{-(j-1)D}{m}}
  \\ & =(N-k_0+1)\cdot\zeta_{D}\cdot\left(\frac{b}{2}\right)^D
  a^{-\frac{(q-1)D}{m}}\sum_{j=0}^{\infty}(a^{-\frac{D}{m}})^{j}
=(N-k_0+1)\cdot a^{-\frac{(q-1)D}{m}} c(D),
\end{split}
\end{align*}
where $\zeta_{D}$ denotes the $\operatorname{Leb}_{\mathbb{T}^{D}}$-measure of the $D$-dimensional unit ball and $c(D)$ simply collects all the terms in the above estimate which are independent of $q,\, k_0$ and $N$, that is, $c(D)=\zeta_{D}\cdot\left(\frac{b}{2}\right)^D \sum_{j=0}^{\infty}(a^{-\frac{D}{m}})^{j}$.

Now, set $k_0(N)=\lfloor\delta N\rfloor$ for some $\delta\in(0,\nu)$ and
$q(N)=\lfloor\eps N\rfloor$ for some $\eps\in(0,\delta/\kappa)$.  Note that for
large enough $N$, we have $k_0(N)\geq \kappa q(N)$ and $N\geq k_0(N)/\nu\geq
N_0[=N_0(k_0(N))]$.  Hence, for sufficiently large $N$, the above gives
\begin{align*}
\operatorname{Leb}_{\mathbb{T}^{D}}(\{\theta\in\mathbb{T}^D\:(\lambda_{N}(\theta,\varphi^{+}(\theta))\geq 0\})
&\leq 
\operatorname{Leb}_{\mathbb{T}^{D}}(B_{q(N),k_0(N),N})\\
&\leq
N\cdot a^{-\frac{(\lfloor \eps N\rfloor-1)D}{m}} c(D)
\leq a^{-\frac{\eps N D}{2m}}.
\end{align*}
The statement follows with $\gamma_-=(\log a)\cdot \eps D/2M$. 
\end{proof}

\end{document}